\theoremstyle{definition}
\theoremstyle{plain}
\newtheorem{proposition}{Proposition}
\newtheorem{corollary}{Corollary}
\newtheorem{theorem}{Theorem}
\newtheorem{lemma}{Lemma}
\newcommand{\Z}{\mathbb{Z}}
\newcommand{\R}{\mathbb{R}}
\newcommand{\N}{\mathbb{N}}
\newcommand{\mc}[1]{\mathcal{#1}}
\DeclareMathOperator{\Si}{Si}
\title{Concentration inequalities for Paley-Wiener spaces}
\author{Syed Husain, Friedrich Littmann}
\begin{document}

\maketitle

\begin{abstract} This article considers the question of how much of the mass of an element in a Paley-Wiener space can be concentracted on a given set. We seek bounds in terms of  relative densities of the given set.  We extend a result of Donoho and Logan from 1992 in one dimension and consider similar results in higher dimensions.
\end{abstract}

\section{Introduction}

%Given a closed subspace $Y$ of an $L^1$-space, $F\in Y$, and $n\in L^1$, under which condition on $n$ can $F$ be recovered from knowledge of $F+n$?  This question has some remarkable applications and has been studied extensively (cf.\ Cand\`es, Romberg, Tao \cite{CRT06}, Donoho and Logan \cite{DL}, Benyamini, Kro\'o, and Pinkus  \cite{BKP12}, Abreu and Speckbacher \cite{AS21}.) 

 Let $M$ be a convex body in $\R^d$, and let $\mc{B}_p(M)$, $1\le p\le \infty$, be the Paley-Wiener space of elements from $L^p(\R^d)$ with distributional Fourier transform supported in $M$. The Fourier transform $\mc{F} f$ is given by
\[
\mc{F}f(\varphi)  = \int_\R \widehat{\varphi}(t) f(t) dt
\]
 for a Schwarz function $\varphi$.  (We use $\widehat{\varphi}(t) = \int \varphi(x) e^{-2\pi i x t} dx$.) We write $\mc{B}_p(\tau)$ if $M$ is the ball with center at the origin and radius $\tau$. 

Let $N$ and $W_\delta\subseteq \R^d$ be measurable and set $W_\delta(x) = x+ W_\delta$. (In this article, $W_\delta$ is either a ball or a cube.)  We consider the problem of finding a constant $C(M,\delta)>0$ such that
\begin{align}\label{intro-mass}
\|G \chi_N\|_1 \le C(M,\delta) \sup_{x\in\R^d} |N\cap W_\delta(x)| \, \|G\|_1\text{ for all }G\in \mc{B}_1(M).
\end{align}

Here $|.|$ denotes Lebesgue measure and $\chi_N$ is the characteristic function of $N$. We emphasize that the constant is  not allowed to depend on  $N$.

This question was studied by Donoho and Logan \cite{DL} in dimension $d=1$ in connection with recovery of a bandlimited signal that is corrupted by noise. In their setting, an unknown noise $n\in L^1(\R)$ is added to a known signal $F\in \mc{B}_1([-\tau,\tau])$, and they investigate sufficient conditions under which the best approximation $\widetilde{F}\in \mc{B}_1([-\tau,\tau])$ to $F+n$ satisfies $\widetilde{F} = F$, i.e., when $F$ can be perfectly recovered from knowledge of $F+n$ through  best $L^1$-approximation.

Denoting now by $N$ the support of $n$, it is a remarkable fact that the concentration condition
\begin{align}\label{logan-phenom}
\frac{\|G \chi_N\|_1 }{\| G\|_1 } <\frac12\textrm{ for all $G\in \mc{B}_1(M)$}
\end{align}
 is  sufficient to conclude that $F = \widetilde{F}$. The argument can be found in several places, e.g., Donoho and Stark \cite[Section 6.2]{DS}, who refer to it as Logan's phenomenon. (Logan's thesis \cite{L65} appears to contain the earliest record of this argument.) It was shown in \cite[Theorem 7]{DL} that \eqref{intro-mass} holds for $W_\delta(x) = [x-\frac\delta2,x+\frac\delta2]$ with
\begin{align}\label{DL-result}
C([-\tau,\tau],\delta) = \frac{\pi\tau}{\sin(\pi\tau\delta)},
\end{align}
and combining this with \eqref{logan-phenom}, it is evident that this gives $F = \widetilde{F}$ provided the relative density (or Nyquist density) of the support of the noise satisfies
\[
\delta^{-1}\sup_{x\in \R} |N\cap[x,x+\delta]| <\frac{\sin(\pi\tau\delta)}{2\pi\tau \delta}.
\]

We mention that conditions to recover an element of a closed subspace of an $L^1$ space that has been corrupted by a sparse $L^1$-noise have been investigated in many different settings, and concentration inequalities lead frequently to sufficient conditions. (This relies on the fact that if a set $N$ satisfies an analogue of \eqref{logan-phenom} for all $G$ in a given closed subspace of an $L^1$-space, then the zero function is the closest element from the subspace to every $L^1$ function with support contained in $N$.) For interested readers we refer to  Cand\`es, Romberg, and Tao \cite{CRT06}, Benyamini, Kro\'o, and Pinkus  \cite{BKP12}, Abreu and Speckbacher \cite{AS21}, and the references therein.

\section{Results}

There are two questions that this article seeks to address. First, it is clear that the shape of the  bound in \eqref{DL-result} requires $\delta\tau<1$. In contrast, it was shown for $p=2$ in \cite[Theorem 4]{DL} that for any positive $\tau$ and $\delta$
\[
\|G\chi_N\|_2^2 \le \left(\tau+\delta^{-1}\right) \sup_{x\in\R} \left|N\cap[x,x+\delta]\right| \, \|G\|_2^2
\]
for all $G\in \mc{B}_2(\tau)$ (with constants adjusted due to the different normalization of the Fourier transform) which suggests that an inequality with constant $c(\tau+\delta^{-1})$ should also be true for $p=1$. Our first result confirms that this is the case.

\begin{theorem}\label{thm1} Let $N$ be the support of $n\in L^1(\R)$. Then for all $G\in \mc{B}_1(\tau)$
\[
\|G\chi_N\|_1 \le C_{\tau,\delta} \sup_{x\in \R} \left|N\cap [x,x+\delta]\right|\, \|G\|_1.
\]
where $C_{\tau,\delta}\le \frac{80}{13}(\tau + \delta^{-1})$ for all positive $\tau$ and $\delta$. The bound may be improved to $C_{\tau,\delta}\le \frac52(\tau+\delta^{-1})$ for $\tau\delta\ge 2$. Moreover, $\lim_{\tau\delta\to\infty} C_{\tau,\delta} =2$. 
\end{theorem}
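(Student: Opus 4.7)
The plan is a reproducing-kernel majorization argument. Fix $\psi\in L^1(\R)\cap L^\infty(\R)$ whose Fourier transform $\hat\psi$ is identically $1$ on $[-\tau,\tau]$. For $G\in\mc{B}_1(\tau)$, $\hat G$ is a continuous function supported in $[-\tau,\tau]$, so $\widehat{\psi*G}=\hat\psi\hat G=\hat G$ and hence $\psi*G = G$ pointwise. This produces the pointwise majorant $|G(x)|\le (|\psi|*|G|)(x)$, and Fubini then yields
\[
\|G\chi_N\|_1 \le \|G\|_1 \cdot \sup_{y\in\R}\int_N|\psi(x-y)|\,dx.
\]
The entire theorem now reduces to choosing $\psi$ so that the outer supremum is controlled by $\sup_x|N\cap[x,x+\delta]|$ with a good constant.

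To extract the local density, I would tile $\R$ by length-$\delta$ intervals centered at multiples of $\delta$, namely $I_k=[(k-\tfrac12)\delta,(k+\tfrac12)\delta)$ for $k\in\Z$. Each translate $y+I_k$ is an interval of length $\delta$, so $|(N-y)\cap I_k|\le\sup_z|N\cap[z,z+\delta]|$ and
\[
\int_N|\psi(x-y)|\,dx \le \sup_{z\in\R}|N\cap[z,z+\delta]|\cdot\sum_{k\in\Z}\sup_{t\in I_k}|\psi(t)|,
\]
so the task reduces to estimating the kernel sum $S(\psi,\delta):=\sum_k\sup_{I_k}|\psi|$.

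For $\psi$ I would use the inverse Fourier transform of the symmetric trapezoid with plateau $[-\tau,\tau]$ and linear ramp width $\eta>0$, namely
\[
\psi(x)=\frac{\sin(\pi(2\tau+\eta)x)\sin(\pi\eta x)}{\pi^2\eta x^2},
\]
so $\psi(0)=2\tau+\eta$ is the global maximum of $|\psi|$ and $|\psi(x)|\le 1/(\pi^2\eta x^2)$ for $x\neq 0$. The central interval $I_0$ contributes $2\tau+\eta$; for $|k|\ge 1$ the minimum of $|t|$ on $I_k$ is $(|k|-\tfrac12)\delta$, so using $\sum_{k\ge 1}(k-\tfrac12)^{-2}=\pi^2/2$,
\[
\sum_{k\ne 0}\sup_{I_k}|\psi| \le \frac{2}{\pi^2\eta\delta^2}\sum_{k\ge 1}\frac{1}{(k-\tfrac12)^2}=\frac{1}{\eta\delta^2}.
\]
Thus $S(\psi,\delta)\le 2\tau+\eta+(\eta\delta^2)^{-1}$, and the balanced choice $\eta=\delta^{-1}$ produces $S(\psi,\delta)\le 2(\tau+\delta^{-1})$. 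This already furnishes the uniform upper bounds $C_{\tau,\delta}\le\tfrac{80}{13}(\tau+\delta^{-1})$ and $\tfrac{5}{2}(\tau+\delta^{-1})$ a fortiori, and gives $\limsup_{\tau\delta\to\infty}C_{\tau,\delta}\le 2$.

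The main obstacles I expect are twofold. Technically, the convolution identity $\psi*G=G$ needs to be justified for $G$ only in $L^1(\R)$ (rather than $L^2$), which reduces to observing that $\hat G$ is a continuous compactly supported function and $\hat\psi=1$ on its support, so that $\hat\psi\hat G=\hat G$ holds pointwise and Fourier uniqueness closes the identity. Conceptually, the sharper claim $\lim_{\tau\delta\to\infty}C_{\tau,\delta}=2$ requires a matching lower bound witnessing near-saturation in the regime $\tau\delta\to\infty$; a natural candidate is to take $G$ close to the trapezoidal kernel itself (concentrated near the origin) and $N$ a union of short intervals spaced along the lattice $\delta\Z$, so that the kernel inequality is nearly tight both in the peak and tail contributions.
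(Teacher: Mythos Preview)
Your argument is correct and takes a genuinely different route from the paper. The paper follows the Donoho--Logan convolution--inversion scheme: it builds a compactly supported kernel $g_{\tau,\delta}$ (supported in $[-\delta/2,\delta/2]$), shows that $1/\widehat{g}_{\tau,\delta}$ is positive and convex on $[-\tau,\tau]$, and then uses Beurling minimal extrapolation to compute the norm of the inverse convolution operator, obtaining $C_{\tau,\delta}=\|g_{\tau,\delta}\|_\infty/\widehat{g}_{\tau,\delta}(\tau)=2(\tau+\delta^{-1})/\widehat{g}_{\tau\delta/2}(\tau\delta/2)$; the numerical constants $80/13$ and $5/2$ then come from lower bounds on $\widehat{g}_s(s)$. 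You bypass all of this with a reproducing-kernel majorant: a trapezoidal $\hat\psi$ equal to $1$ on $[-\tau,\tau]$, the pointwise bound $|G|\le |\psi|\ast|G|$, and a $\delta$-tiling that converts $\int_N|\psi|$ into the kernel sum $\sum_k\sup_{I_k}|\psi|$. Your estimate $S(\psi,\delta)\le 2(\tau+\delta^{-1})$ with $\eta=\delta^{-1}$ is correct (the identity $\sum_{k\ge 1}(k-\tfrac12)^{-2}=\pi^2/2$ does the work), so you actually obtain the uniform bound $C_{\tau,\delta}\le 2(\tau+\delta^{-1})$ for \emph{all} $\tau,\delta>0$, which the paper reaches only in the limit $\tau\delta\to\infty$. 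What the paper's approach buys is an exact identification of the minimal extrapolation for its specific $g$, hence the sharp constant within the Donoho--Logan framework; your approach is more elementary and yields the stronger inequality directly.

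Two small remarks. First, your sentence ``gives $\limsup_{\tau\delta\to\infty}C_{\tau,\delta}\le 2$'' should be read as a statement about the multiplier of $(\tau+\delta^{-1})$, not about $C_{\tau,\delta}$ itself; the theorem's limit clause is phrased the same (loose) way. Second, neither the paper's proof nor yours supplies the matching lower bound needed to turn that $\limsup$ into an actual limit; your suggested test configuration is reasonable but would need to be carried out.
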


As is usual with this method, the bounds only become effective when the density is a fraction of the reciprocal of the type $\tau$. If one is interested in bounds for $\|G\chi_N\|_1/ \|G\|_1$ at larger densities, a version of the Logvinenko-Sereda theorem from O. Kovrijkine \cite{Kov01} gives non-trivial bounds whenever the density is smaller than $1$\footnote{The authors are grateful to Walton Green to draw their attention to \cite{Kov01}.}. The constants are not effective and don't yield concrete bounds to decide when the quotient is $<1/2$. 

Our second result deals with reconstruction in higher dimensions. We investigate the case when $M$ is a cube and $W_\delta(0)$ is a ball with center at the origin, and we indicate the obstructions that we encountered when taking $M$ to be a ball with center at the origin. We denote by $J_\nu$ the Bessel function of the first kind and by $j_\nu(k)$ its $k$th positive zero.

\begin{theorem}\label{thm2}
Let $d\in\N$ and  let $N\subseteq \R^d)$. If $\alpha\lambda<j_{d/2}(1) \, d^{-\frac12},$ then for all $G\in \mc{B}_1([\lambda,\lambda]^d)$
\begin{align*}
 \|G\chi_N\|_1
&\leq\frac{(\sqrt{d}\lambda)^{d/2}}{\alpha^{d/2}J_{d/2}(2\pi\sqrt{d}\alpha\lambda)}\sup_{x\in\R^d}\left| N\cap B(x,\alpha)\right| \, \|G\|_1.
\end{align*}
\end{theorem}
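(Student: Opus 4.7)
The plan is to reduce, via a pointwise Plancherel--P\'olya type inequality, to a local $L^1$ bound on $|G|$, then integrate against $\chi_N$ by Fubini; this follows the Donoho--Logan strategy used for Theorem~\ref{thm1}, but adapted to the cube/ball geometry in dimension $d$. (Throughout I interpret $[\lambda,\lambda]^d$ as $[-\lambda,\lambda]^d$.)

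Concretely, I aim to establish the pointwise inequality
\[
|G(y)|\le \frac{(\sqrt{d}\lambda)^{d/2}}{\alpha^{d/2}J_{d/2}(2\pi\sqrt{d}\alpha\lambda)}\int_{B(y,\alpha)}|G(z)|\,dz
\qquad(y\in\R^d)
\]
for every $G\in\mathcal B_1([-\lambda,\lambda]^d)$. Given this, Fubini delivers the theorem at once:
\begin{align*}
\int_N|G(y)|\,dy
&\le \frac{(\sqrt{d}\lambda)^{d/2}}{\alpha^{d/2}J_{d/2}(2\pi\sqrt{d}\alpha\lambda)}\int|G(z)|\,|N\cap B(z,\alpha)|\,dz\\
&\le \frac{(\sqrt{d}\lambda)^{d/2}}{\alpha^{d/2}J_{d/2}(2\pi\sqrt{d}\alpha\lambda)}\sup_{x\in\R^d}|N\cap B(x,\alpha)|\,\|G\|_1.
\end{align*}

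The form of the constant is dictated by the identity $\widehat{\chi_{B(0,\alpha)}}(\xi)=(\alpha/|\xi|)^{d/2}J_{d/2}(2\pi\alpha|\xi|)$, which is a positive, radial, monotone-decreasing function on $[0,j_{d/2}(1)/(2\pi\alpha))$. Under the density hypothesis on $\alpha\lambda$ (ensuring that the circumradius $\sqrt d\lambda$ of the cube $[-\lambda,\lambda]^d$ lies in this interval), the quantity $m:=\min_{\xi\in[-\lambda,\lambda]^d}\widehat{\chi_{B(0,\alpha)}}(\xi)$ is attained at a corner and equals $\alpha^{d/2}(\sqrt d\lambda)^{-d/2}J_{d/2}(2\pi\sqrt d\alpha\lambda)$, so the target constant is precisely $1/m$. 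This identifies $\chi_{B(0,\alpha)}$ as the natural auxiliary kernel.

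To prove the pointwise inequality I would, after translating $y$ to $0$, construct a bounded test function $h$ supported in $B(0,\alpha)$ with $\widehat h\equiv 1$ on $[-\lambda,\lambda]^d$ (so that $\int G h\,dz=G(0)$ for every $G\in\mathcal B_1([-\lambda,\lambda]^d)$) and $\|h\|_\infty\le 1/m$; then $|G(0)|=|\int G h\,dz|\le\|h\|_\infty\int_{B(0,\alpha)}|G(z)|\,dz$ gives the desired bound. The natural first ansatz $h_0=\chi_{B(0,\alpha)}/m$ satisfies $\widehat{h_0}=\widehat{\chi_{B(0,\alpha)}}/m\ge 1$ on the cube, with equality only at the corners; a corrective term supported in $B(0,\alpha)$ must be added to enforce $\widehat h\equiv 1$ on the cube without inflating $\|h\|_\infty$ beyond $1/m$. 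Producing this correction --- equivalently, computing the norm of the point-evaluation functional $G\mapsto G(0)$ on $\mathcal B_1([-\lambda,\lambda]^d)$ viewed as a subspace of $L^1(B(0,\alpha))$ --- is the main obstacle, and I expect it to rely on a Beurling--Selberg-type extremal argument exploiting the radial monotonicity of $\widehat{\chi_{B(0,\alpha)}}$. As a consistency check, iterating Theorem~\ref{thm1} coordinatewise with the inscribed cube of side $2\alpha/\sqrt d$ inside $B(x,\alpha)$ yields the weaker bound $(\pi\lambda/\sin(2\pi\alpha\lambda/\sqrt d))^d$, which has the correct scaling in $\alpha\lambda$ but not the sharp Bessel constant of Theorem~\ref{thm2}.
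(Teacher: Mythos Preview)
Your reduction via Fubini is fine, but the proposal is not a proof: the one nontrivial step --- the pointwise bound $|G(y)|\le m^{-1}\int_{B(y,\alpha)}|G|$, equivalently the existence of $h\in L^\infty$ supported in $B(0,\alpha)$ with $\widehat h\equiv 1$ on $[-\lambda,\lambda]^d$ and $\|h\|_\infty\le m^{-1}$ --- is explicitly left open. You call it ``the main obstacle'' and only say you ``expect'' a Beurling--Selberg argument to work; no construction or estimate is given. Note also that your ansatz $h_0=\chi_{B(0,\alpha)}/m$ has $\widehat{h_0}>1$ strictly in the interior of the cube, so any correction $h-h_0$ must have nontrivial Fourier transform on the whole cube while remaining supported in the ball and not increasing $\|h\|_\infty$; there is no evident mechanism forcing this, and the radial monotonicity of $\widehat{\chi_{B(0,\alpha)}}$ alone does not supply one.

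The paper does \emph{not} go through a pointwise inequality. It follows the Donoho--Logan template directly: with $g_\alpha=\chi_{B(0,\alpha)}$ one writes $G=(T_{g_\alpha}^{-1}G)*g_\alpha$ and estimates $\int_N|G|\le \|g_\alpha\|_\infty\sup_y|N\cap B(y,\alpha)|\cdot\|T_{g_\alpha}^{-1}\|\,\|G\|_1$, so the whole task is to bound the operator norm of the inverse of convolution with $g_\alpha$ on $\mathcal B_1([-\lambda,\lambda]^d)$. This is done by expanding $1/\widehat{g_\alpha}$ on the cube in a Fourier series $\sum_n H_\alpha(n)e^{\pi i n\cdot t/\lambda}$ and realizing the inverse as convolution with the discrete measure $\nu_\alpha=\sum_n H_\alpha(n)\delta_{n/2\lambda}$; then $\|T_{g_\alpha}^{-1}\|\le|\nu_\alpha|(\R^d)=\sum_n|H_\alpha(n)|$. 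The key technical input --- and the idea your proposal is missing --- is a sign computation: using the Laguerre--P\'olya factorization of $u\mapsto J_{d/2}(\sqrt u)/u^{d/4}$ one obtains a Schoenberg-type representation $|\xi|^{d/2}/J_{d/2}(2\pi\alpha|\xi|)=\int_{-\infty}^{0}e^{-|\xi|^2 t}G(t)\,dt$ with $G\ge 0$, which factorizes the Fourier coefficient integral over the cube into a product of one-dimensional integrals of convex even functions against cosines. This forces $H_\alpha(n)=(-1)^{n_1+\cdots+n_d}|H_\alpha(n)|$, whence $\sum|H_\alpha(n)|=\sum(-1)^{n_1+\cdots+n_d}H_\alpha(n)=1/\widehat{g_\alpha}(\lambda,\ldots,\lambda)=m^{-1}$, giving the stated constant. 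In short, the paper solves a \emph{minimal extrapolation} problem for $1/\widehat{g_\alpha}$ (find a measure of minimal total variation with prescribed Fourier transform on the cube), whereas your route requires solving a different and, as formulated, unresolved extremal problem for the point-evaluation functional.
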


Analogously to dimension one, for a convex body $K$ we define the maximum Nyquist density of $N$ (relatively to $K$) by 
\[
\rho(N,K)=\frac{1}{|K|}\sup\limits_{u\in \R^d}|N\cap (u+K)|.
\]

%The set $K$ acts as a window, which slides over the set $N$. The Nyquist density picks the measure of the set where the window captures the largest contribution from the noise. The set $F$ is the support of the Fourier transform of the signal.

We compare the result of Theorem \ref{thm2} to the case where the window $K$ is a hypercube of side length $\delta$, which is an extension of the $L_1$ reconstruction result by \cite{DL}.  The zero $j_{p}(1)$ has an asymptotic expansion given in \cite{NIST:DLMF} by $$j_{p}(1)\simeq \left(\frac{p}{2}+\frac{1}{4}\right)\pi.$$
Denote the ball of radius $r$ centered at origin by $B(0,r)$, and the volume of a ball with radius $\alpha$ in $d$-dimensions by $V_d(\alpha)$. It is given by $V_d(\alpha)=\frac{\pi^{d/2}}{\Gamma(d/2+1)}\alpha^d.$
When the window is a ball of radius $\alpha$, perfect reconstruction is possible if the maximum Nyquist density satisfies
\[
\rho(N,B(0,\alpha))<\frac{\Gamma(d/2+1)J_{d/2}(2\pi\sqrt{d}\alpha\lambda)}{2(\pi\alpha\sqrt{d}\lambda)^{d/2}},
\]
 where $\alpha<\frac{j_{d/2}(1)}{2\pi\sqrt{d}\lambda}$.
For $\lambda\delta<2\pi$, the corresponding density bound is
\[
\rho(N,[-\delta/2,\delta/2]^d)<\frac{1}{2}\left(\frac{\sin(\lambda\delta/2)}{\lambda\delta/2}\right)^d.
\]

The support of the Fourier transform for both the problems is same, that is, $[-\lambda,\lambda]^d$.
% In order to compare the two Nyquist densities, we consider the following three cases.  First, assume that the ball is inscribed in the cube, that is, $\alpha=\frac{\delta}{2}$. Figure \ref{fig1} shows $\delta=0.22$. For $d>119$, the Nyquist density for the ball window is bigger than that of cube window.  
%\begin{figure}[h]
%\centering
%\includegraphics[scale=0.7]{ND-CvB.png}
%\caption{GRAPH LABELS (use B/W instead)}
%\label{fig1}
%\end{figure}
In the second case, we consider the ball just outside the cube such that the radius satisfies $\alpha=\frac{\delta\sqrt{d}}{2}$. Let $\delta=\frac{1}{2\pi^2}.$ For large $d$, the bound is asymptotically
\begin{align*}
\rho(T,B(0,\alpha),F)&<\frac{\Gamma(d/2+1)J_{d/2}(d/2)}{2(d/4)^{d/2}}\\
&\sim \frac{\sqrt{\pi d}\left(\frac{d}{2e}\right)^{d/2}}{2\left(\frac{d}{4}\right)^{d/2}}\frac{\Gamma(1/3)}{2^{1/3}\cdot 3^{1/6}\cdot \pi.d^{1/3}}\\
&\sim d^{1/6}\left(\frac{2}{e}\right)^{d/2}
\end{align*}
The Nyquist density for the cube window satisfies
$$\rho(T,[-\delta/2,\delta/2]^d,F)<\frac{1}{2}\left(4\pi^2\sin(1/4\pi^2)\right)^d.$$
 The bound for the Nyquist density of the cube window remains larger than the bound for the Nyquist density of ball window for any $d$ in this case.

Third, we set the volume of the cube is equal to the volume of the ball. Then the radius $\alpha$ of the ball satisfies
$$\alpha=\delta\sqrt[d]{\frac{\Gamma(d/2+1)}{\pi^{d/2}}}.$$
Using Sterling's approximation, we get
$$\alpha\simeq \delta\sqrt[d]{\left(\frac{d}{2\pi e}\right)^{d/2}(\pi d)^{1/2}}.$$
Let $\delta=\frac{\sqrt{2\pi e}}{4\pi^2}$. For large $d$, the Bessel function in the Nyquist density of the ball window satisfies
$$J_{d/2}(2\pi^2\sqrt\alpha)=J_{d/2}(d\cdot\pi^{1/2d}\cdot d^{1/2d}/2)\rightarrow J_{d/2}(d/2),$$
since $\pi^{1/2d}\cdot d^{1/2d}\rightarrow 1$ for large $d$. The bound for the  Nyquist density of the ball window is then
\begin{align*}
\rho(T,B(0,\alpha),F)&<\frac{\Gamma(d/2+1)J_{d/2}(d/2)}{2\left(\pi^2\sqrt{d}\frac{\sqrt{2\pi e}}{4\pi^2 \sqrt{\pi}}\sqrt[d]{\Gamma(d/2+1)}\right)}\\
%&\sim \frac{\sqrt{\Gamma(d/2+1)}J_d/2(d/2)}{2(\sqrt{2de}/4)^{d/2}}\\
&\sim\frac{\Gamma(1/3).\pi^{1/4}}{2\cdot 2^{1/3}\cdot 3^{1/6}\cdot \pi}\left(\frac{4}{2e}\right)^{d/2}\frac{1}{d^{1/12}}
\end{align*}

Tor the cube window, the sufficient bound for reconstruction is
$$\rho(T,[-\delta/2,\delta/2]^d,F)<\frac{1}{2}\left(\frac{\sin(\sqrt{2\pi e}/8\pi)}{\sqrt{2\pi e}/8\pi}\right)^d.$$
In this case also, the Nyquist density for the cube window remains larger than the Nyquist density of ball window for any dimension $d$.

\section{Proof of Theorem \ref{thm1}} We briefly review a general approach to prove inequalities of the above form introduced by Donoho and Logan in \cite{DL}. Construct a kernel $K(x,y)$ so that $f\mapsto Tf$ given by
\[
Tf(y) = \int K(x,y) f(x) dx
\]
defines a bounded invertible transformation when restricted to $\mc{B}_1(\tau)$. Then a change of integration order gives
\begin{align*}
\int_N |G(x)| dx &\le \int_N  \int|K(x,y)| T^{-1}G(x)| dx dy \\
&\le \left(\sup_x \int_N K(x,y) dy\right) \|T^{-1}\| \|G\|_1.
\end{align*}

If $K(x,y) = g(x-y)$ for some $g\in L^\infty$ with ${\rm supp}(g)\subseteq W_\delta(0)$, then the supremum may be further estimated by $\|g\|_\infty \sup_x |N\cap W_\delta(x)|$, where $T=T_g$ is now  the convolution operator $T_gf = f*g$ restricted to $\mc{B}_1(\tau)$. For given $g$ the size of the constant depends then only on $\|g\|_\infty \|T_g^{-1}\|$, and \eqref{logan-phenom} shows that we need
\[
\sup_x |N\cap W_\delta(x)| <\frac1{2 \|g\|_\infty \|T_g^{-1}\|}. 
\]

Thus, it is the task to construct $g$ as above where $\|g\|_\infty \|T_g^{-1}\|$ is as small as possible. The choice in \cite{DL} was $g = \chi_{[-\frac\delta2,\frac\delta2]}$, which is optimal for $\delta\tau\le\frac12$, gives a non-optimal bound for $\frac12<\delta\tau<1$, and fails to give a bound for $\delta\tau\ge 1$. This can be traced back to the fact that $\widehat{g}(\delta) =0$.

\medskip

To create an auxiliary function $g$ with computable product $\|g\|_\infty \|T_g^{-1}\|$, Logan and Donoho observed that if $1/\widehat{g}$ is positive and convex up on an interval $I = [-a,a]$ with center at the origin, then the periodic extension of $1/\widehat{g}$ restricted to $I$ is the Fourier transform of a measure $\nu$ that acts as the inverse operator of convolution with $g$ on $\mc{B}_1(a)$ and has total variation $|\nu| = 1/\widehat{g}(a)$. (In fact, $\nu$ is the minimal extrapolation of $1/\widehat{g}$ restricted to $I$ in the sense of Beurling.) Our choice of $g$ is based on this idea. We define for $\tau>0$ and real $x$ a function $g_\tau$, supported on $[-1,1]$, by
\[
g_\tau(x) = -2 (1-|x|) \frac{\cos2\pi (\tau+1) x - \cos2\pi \tau x}{4\pi^2 x^2} \chi_{[-1,1]}(x).
\]

The Fourier transform of $g_\tau$ has the useful property that  the sum of its  partials with respect to $t$ and $\tau$ has a simple integral representation.

\begin{figure}[t]
\includegraphics[width=6.5cm]{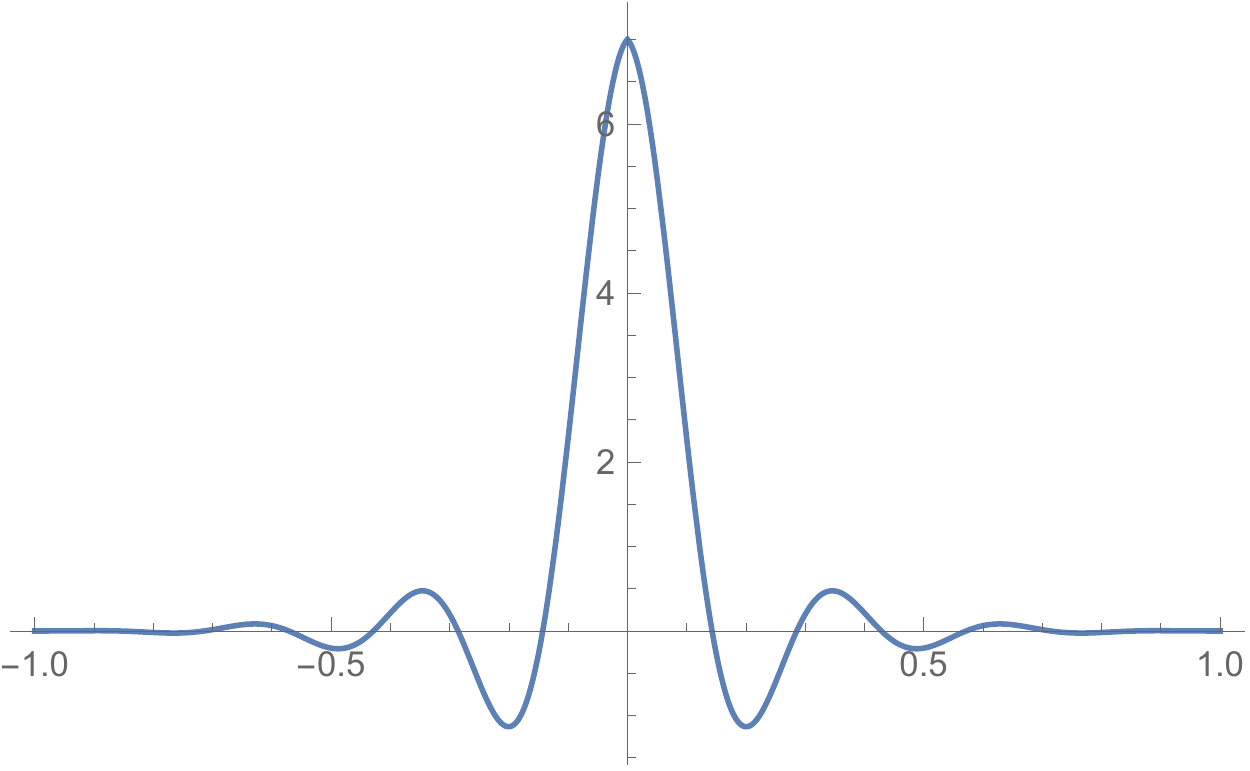}
\includegraphics[width=6.5cm]{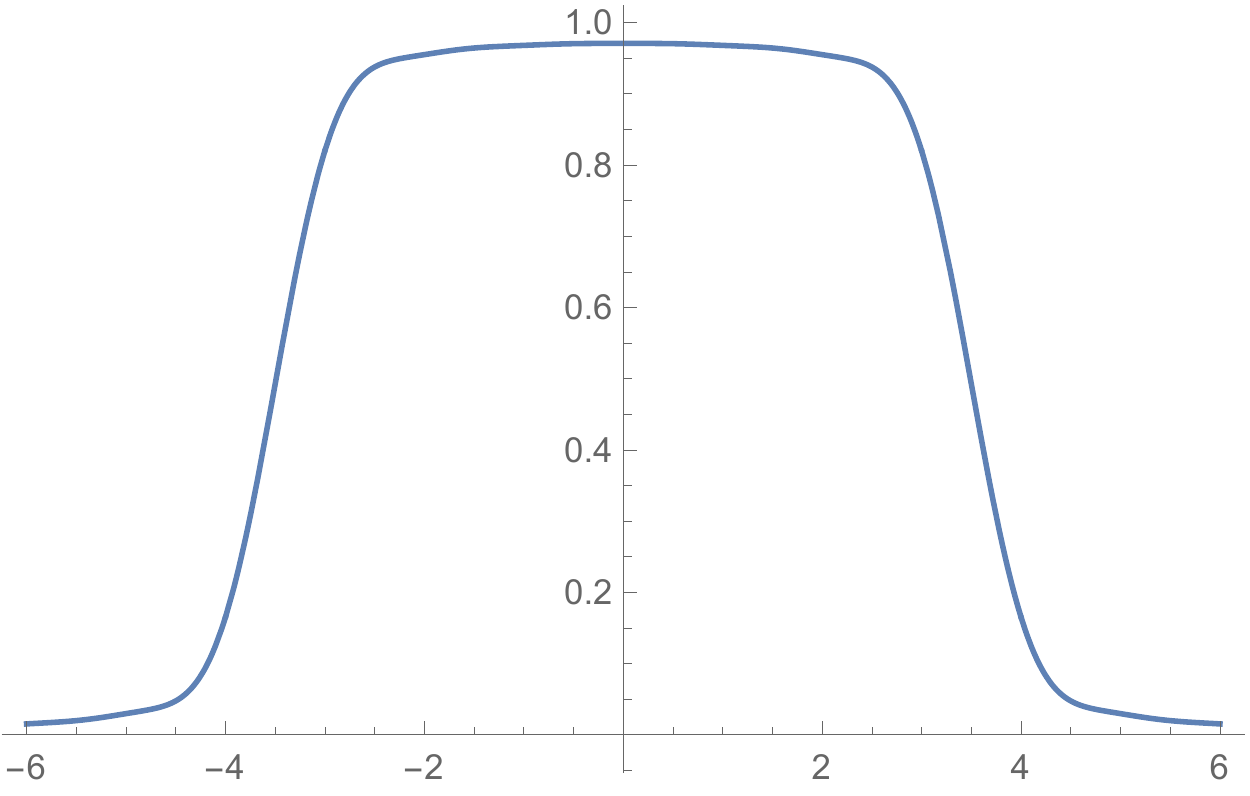}
\caption{The transform pair $g_4(x)$ and $\widehat{g}_4(t)$}
\label{fig2}
\end{figure}

\begin{proposition}\label{first-partials}  For any $t$ and $\tau$
\[
\frac{\partial}{\partial t} \left(\widehat{g}_\tau(t) \right) + \frac{\partial}{\partial \tau} \left(\widehat{g}_\tau(t)\right) = \int_{2\pi(t+\tau)}^{2\pi(t+\tau+1)} \frac{\sin^2 u}{u^2} du.
\]
\end{proposition}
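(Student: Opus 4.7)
The plan is to decompose $g_\tau$ as a product whose Fourier transform is a convolution, and then exploit the fact that the sum $\partial_t+\partial_\tau$ telescopes the two contributions. Applying the product-to-sum identity $\cos 2\pi\tau x - \cos 2\pi(\tau+1)x = 2\sin\pi x\sin\pi(2\tau+1)x$, I would rewrite
\[
g_\tau(x) = \Lambda(x)\cdot\frac{\sin\pi x}{\pi x}\cdot\frac{\sin\pi(2\tau+1)x}{\pi x},\qquad \Lambda(x) = (1-|x|)\chi_{[-1,1]}(x),
\]
and invoke the convolution theorem to obtain $\widehat{g}_\tau = \widehat{\Lambda} * T_\tau$, where $\widehat{\Lambda}(t) = \sin^2(\pi t)/(\pi t)^2$ and $T_\tau$ is the trapezoid arising as the convolution of two characteristic functions of intervals: $T_\tau\equiv 1$ on $[-\tau,\tau]$, descends linearly to $0$ on $[\tau,\tau+1]$ and $[-\tau-1,-\tau]$, and vanishes outside.

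In this form, $\partial_\tau$ acts only on the trapezoid, producing $\partial_\tau T_\tau(u) = \chi_{[-\tau-1,-\tau]}(u) + \chi_{[\tau,\tau+1]}(u)$, while an integration by parts transfers $\partial_t$ to the trapezoid as well, yielding $\partial_t\widehat{g}_\tau(t) = \int\widehat{\Lambda}(t-u)T_\tau'(u)\,du$ with $T_\tau'(u) = \chi_{[-\tau-1,-\tau]}(u) - \chi_{[\tau,\tau+1]}(u)$. Adding the two, the contributions on $[\tau,\tau+1]$ cancel and those on $[-\tau-1,-\tau]$ double, leaving
\[
(\partial_t+\partial_\tau)\widehat{g}_\tau(t) = 2\int_{-\tau-1}^{-\tau}\widehat{\Lambda}(t-u)\,du.
\]
A change of variable $v = t-u$ turns this into an integral of $\sin^2(\pi v)/(\pi v)^2$ over an interval depending only on $t+\tau$, and one further linear substitution delivers the stated integral representation.

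The main obstacle is simply bookkeeping: tracking the opposite-sign structure of $T_\tau'$ versus $\partial_\tau T_\tau$ so that the correct cancellation and doubling are identified, together with the normalizing constants that arise from the change of variable. A self-contained alternative bypasses the convolution theorem: differentiate under the Fourier integral, observe that $-2\pi ix\, g_\tau(x) + \partial_\tau g_\tau(x)$ collapses by the angle-addition formula to $\tfrac{2(1-|x|)\sin\pi x}{\pi x}e^{-i\pi(2\tau+1)x}\chi_{[-1,1]}(x)$, rewrite $2\sin\pi x\cos\pi(2(t+\tau)+1)x = x\int_{2\pi(t+\tau)}^{2\pi(t+\tau+1)}\cos(vx)\,dv$, apply Fubini, and finish with the elementary identity $\int_0^1(1-x)\cos(vx)\,dx = (1-\cos v)/v^2 = 2\sin^2(v/2)/v^2$.
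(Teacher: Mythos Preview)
Your proposal is correct, and both of your outlined routes lead to the same final expression the paper obtains, namely $\frac{2}{\pi}\int_{\pi(t+\tau)}^{\pi(t+\tau+1)}\frac{\sin^2 u}{u^2}\,du$ (which a substitution recasts as stated).

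The approach, however, is genuinely different from the paper's. The paper computes $G_t$ and $G_\tau$ \emph{separately} by differentiating under the Fourier integral, evaluates each in terms of the sine integral $\Si$ and some rational-trigonometric terms, and only at the end combines them so that most pieces cancel. Your first route instead identifies the structural reason for the simplification up front: the factorization $g_\tau=\Lambda\cdot\mathrm{sinc}\cdot\mathrm{sinc}_{2\tau+1}$ makes $\widehat{g}_\tau=\widehat{\Lambda}*T_\tau$ with a trapezoid $T_\tau$, and the observation $T_\tau'+\partial_\tau T_\tau=2\chi_{[-\tau-1,-\tau]}$ explains immediately why $\partial_t+\partial_\tau$ collapses to a single short integral of the Fej\'er kernel. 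This is more conceptual and avoids the $\Si$ function entirely. Your second route is closer in spirit to the paper (differentiate under the integral), but the angle-addition step that produces $\frac{2(1-|x|)\sin\pi x}{\pi x}e^{-i\pi(2\tau+1)x}$, followed by the Fubini trick with $2\sin\pi x\cos\pi(2(t+\tau)+1)x=x\int\cos(vx)\,dv$ and the Fej\'er identity $\int_0^1(1-x)\cos(vx)\,dx=2\sin^2(v/2)/v^2$, again bypasses $\Si$ and yields the answer with less algebra. The paper's method has the advantage of being entirely self-contained real calculus; yours has the advantage of revealing why the particular combination $\partial_t+\partial_\tau$ is the right object to compute.
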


\begin{proof}  For ease of notation we set $G(t,\tau) = \widehat{g}_\tau(t)$, and we denote first  partials by $G_t$ and $G_\tau$. Writing 
\[
g_\tau(x) = 2(1-|x|) \frac{\cos 2\pi (\tau+1)x - \cos2\pi \tau x}{(-2\pi ix)^2}\chi_{[-1,1]}(x)
\]
and using that $g_\tau(x)$ is even, we have
\begin{align*}
G_t(t,\tau) &= \int_{-1}^1  (-2\pi i x)  g_\tau(x) e^{-2\pi i x t}dx \\
&= \int_{-1}^1 (-2\pi i x) g_\tau(x) (-i\sin(2\pi x t)) dx \\
&= \int_{-1}^1 2(1-|x|) \frac{\cos(2\pi(\tau+1) x) - \cos(2\pi \tau x)}{2\pi x} \sin(2\pi x t) dx\\
&= 4\int_0^1 (1-x) \frac{\cos(2\pi(\tau+1) x) - \cos(2\pi \tau x)}{2\pi x} \sin(2\pi x t) dx.
\end{align*}
 
Similarly,
\begin{align*}
G_\tau(t,\tau) &= \int_{-1}^1 \frac{\partial}{\partial \tau} (g_\tau(x)) e^{-2\pi i x t} dx \\
&= 4 \int_{0}^1 (1-x) \frac{\sin(2\pi(\tau+1) x) - \sin(2\pi \tau x)}{2\pi x} \cos(2\pi x t) dx.
\end{align*}

The integrals have representations in terms of the sine-integral $\Si(u) = \int_0^u \sin(w)/w dw$. A direct calcuation gives
\begin{align*}
2\int_0^1 \cos(2\pi a x) \frac{\sin(2\pi b x)}{x} dx &= \Si(2\pi(a+b)) - \Si(2\pi(a-b))\\
2\int_0^1 \cos(2\pi a x) \sin(2\pi b x)dx&= -\frac{b}{\pi(a-b) (a+b)} +\frac{\cos(2\pi(a-b))}{2\pi(a-b)} -\frac{\cos(2\pi (a+b))}{2\pi(a+b)}.
\end{align*} 

We obtain
\begin{align*}
G_t(t,\tau) + G_\tau(t,\tau) &= \frac2{\pi^2} \left(  \frac{\sin^2 (\pi(t+\tau))}{(t+\tau)(t+\tau+1)}  +\pi \Si(2\pi(t+\tau+1))-\pi \Si( 2\pi(t+\tau) \right)\\
&= \frac2\pi \int_{\pi(t+ \tau)}^{\pi (t+\tau+1)} \left(-\frac{\partial}{\partial u} \frac{\sin^2 u}{u}\right) du+ \frac2\pi \int_{2\pi (t+\tau)}^{2\pi (t+\tau+1)} \frac{\sin w}{w} dw\\
&=\frac2\pi \int_{\pi(t+\tau)}^{\pi(t+\tau+1)} \frac{\sin^2 u}{u^2} du
\end{align*}
after substituting $w = 2u$ and combining the integrands.  
\end{proof}

\begin{corollary}
\begin{enumerate}
% \item\label{pt-one} $g_\tau$ is continuous on $\R$ (and hence in $L^p(\R)$ for $1\le p\le \infty$).
\item\label{pt-two} $\|g_\tau\|_\infty = g_\tau(0) =2\tau+1$.
\item\label{pt-three} The function $\tau\mapsto \widehat{g}_\tau(0)$ is positive, monotonically increasing, and has limit $1$ as $\tau\to \infty$. Moreover,
\begin{align*}
\widehat{g}_0(0)>0.65, \qquad \widehat{g}_1(1) >0.8.
\end{align*}
\item\label{pt-four} $t\mapsto \big(\widehat{g}_\tau(t)\big)^{-1}$ is positive and convex (up) on $[-\tau,\tau]$.
\end{enumerate}
\end{corollary}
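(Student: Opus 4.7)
The plan is to rewrite $g_\tau$ so that both a pointwise bound and a convenient Fourier-side expression become transparent. First, applying the sum-to-product identity $\cos 2\pi(\tau+1)x - \cos 2\pi \tau x = -2\sin(\pi(2\tau+1)x)\sin(\pi x)$ to the defining formula gives
\[
g_\tau(x) = (2\tau+1)(1-|x|)_+\cdot \frac{\sin(\pi(2\tau+1)x)}{\pi(2\tau+1)x}\cdot \frac{\sin(\pi x)}{\pi x}.
\]
Evaluating at $x=0$ yields $g_\tau(0)=2\tau+1$, and on $[-1,1]$ all three factors on the right are bounded by $1$ in absolute value, which proves (1).

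For (2), the rightmost factor above equals $\mc{F}[h_\tau](x)$, where $h_\tau(u)=(\tau+1-|u|)_+-(\tau-|u|)_+$ is the trapezoidal plateau equal to $1$ on $[-\tau,\tau]$ and linearly interpolating to $0$ on $[\pm\tau,\pm(\tau+1)]$. Combining $\mc{F}[(1-|x|)_+](t)=\sin^2(\pi t)/(\pi t)^2$ with the convolution theorem and the evenness of $h_\tau$ produces the representation
\[
\widehat{g}_\tau(t) = \int_{-\infty}^\infty \frac{\sin^2(\pi s)}{\pi^2 s^2}\,h_\tau(t-s)\,ds.
\]
Nonnegativity of both factors gives positivity, the map $\tau\mapsto h_\tau$ is pointwise nondecreasing so $\widehat{g}_\tau(0)$ is increasing in $\tau$, and the monotone convergence theorem combined with $\int \sin^2(\pi s)/(\pi s)^2\,ds=1$ yields the limit $1$. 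The two numerical lower bounds reduce to explicit one-dimensional integrals (for instance $\widehat{g}_0(0)=2\int_0^1 (1-s)\sin^2(\pi s)/(\pi s)^2\,ds$) which I would estimate by a rigorous quadrature on a small mesh.

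For (3), the pointwise identity $(1/f)''=-f''/f^2+2(f')^2/f^3$ shows that $1/f$ is convex whenever $f>0$ is concave, so it suffices to establish concavity of $\widehat{g}_\tau$ on $[-\tau,\tau]$. Since $h_\tau$ is piecewise linear with corners at $\pm\tau,\pm(\tau+1)$, the distributional relation $h_\tau''=\delta_{-\tau-1}-\delta_{-\tau}-\delta_\tau+\delta_{\tau+1}$ together with the convolution representation above yields
\[
\widehat{g}_\tau''(t) = \bigl[\sigma(t+\tau+1)-\sigma(t+\tau)\bigr] + \bigl[\sigma(t-\tau-1)-\sigma(t-\tau)\bigr],
\]
where $\sigma(y)=\sin^2(\pi y)/(\pi y)^2$. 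For $t\in[-\tau,\tau]$ I set $s=t+\tau\ge 0$ in the first bracket and, using evenness of $\sigma$, $s=\tau-t\ge 0$ in the second; each bracket reduces to $\sigma(s+1)-\sigma(s)$, and the identity $\sin^2(\pi(s+1))=\sin^2(\pi s)$ collapses this to $\frac{\sin^2(\pi s)}{\pi^2}\bigl(\frac{1}{(s+1)^2}-\frac{1}{s^2}\bigr)\le 0$ for $s>0$ (trivially at $s=0$ since $\sigma(0)=1>0=\sigma(1)$). The main obstacle is the sharpness of the numerical thresholds in (2): they are tight enough to rule out crude majorants and force an explicit quadrature estimate with controlled error on the relevant interval.
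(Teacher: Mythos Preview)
Your proof is correct and takes a more direct route than the paper's. The decisive difference is your convolution representation $\widehat{g}_\tau = \sigma * h_\tau$ (with $\sigma(y)=\sin^2(\pi y)/(\pi y)^2$ and $h_\tau$ the trapezoid), obtained by recognizing that the two sinc factors in $g_\tau$, together with the prefactor $2\tau+1$, form the Fourier transform of $h_\tau$. This single formula delivers positivity of $\widehat{g}_\tau$ on all of $\R$, monotonicity in $\tau$, the limit $1$, and---via the distributional identity $h_\tau'' = \delta_{-\tau-1}-\delta_{-\tau}-\delta_\tau+\delta_{\tau+1}$---the four-term expression for $\widehat{g}_\tau''$, all at once. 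The paper instead relies on Proposition~\ref{first-partials} (the identity for $G_t+G_\tau$) to obtain monotonicity in $\tau$ and positivity of $\widehat{g}_\tau(\tau)$, and computes $\widehat{g}_\tau''$ separately by inserting $(-2\pi i x)^2$ under the integral; your argument bypasses that proposition entirely. Both routes land on exactly the same second-derivative formula (the paper's display \eqref{gk-2nd} is your $\sigma(t+\tau+1)-\sigma(t+\tau)+\sigma(t-\tau-1)-\sigma(t-\tau)$), and your reduction of each bracket to $\tfrac{\sin^2(\pi s)}{\pi^2}\bigl((s+1)^{-2}-s^{-2}\bigr)$ is a clean way to see the sign. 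Two small points of polish: your phrase ``the rightmost factor'' should read ``the product of the two sinc factors together with the prefactor $2\tau+1$,'' and in (3) you should state explicitly that the convolution formula from (2) already gives $\widehat{g}_\tau(t)>0$ for every real $t$, which is what licenses the $(1/f)''$ identity. On the numerical inequalities $\widehat{g}_0(0)>0.65$ and $\widehat{g}_1(1)>0.8$, the paper does no more than you do: it simply asserts that direct calculation yields the bounds.
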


\begin{proof}   Property (\ref{pt-two}) is obtained by direct calculation.  For the proof of  (\ref{pt-three}) it follows from symmetry of $t\mapsto \widehat{g}_\tau(t)$ that  $\widehat{g}_\tau'(0) =0$, and hence Proposition \ref{first-partials} gives $\partial/\partial \tau \widehat{g}_\tau(0)>0$. Direct calculations give the claimed bounds.
%  Moreover, the integral representation of $\widehat{g}_\tau(0)$ in terms of $g_\tau$ may be used to show that $\widehat{g}_\tau(0)$ has limit $1$ as $\tau\to \infty$.

 Regarding (\ref{pt-four}),  we require an explicit representation of $\widehat{g}_\tau''(t)$. It follows from 
\[
g_\tau(x) = 2(1-|x|) \frac{\cos 2\pi (\tau+1)x - \cos2\pi \tau x}{(-2\pi ix)^2}\chi_{[-1,1]}(x)
\]
that 
\begin{align}\label{gk-2nd}
\begin{split}
 \widehat{g}_\tau''(t) &= \int_{-1}^1 (-2\pi i x)^2 g_\tau(x) e^{-2\pi i x t} dx\\
&= -\left(\frac{\sin \pi (t-\tau)}{\pi (t-\tau)}\right)^2 -  \left(\frac{\sin \pi (t+\tau)}{\pi (t+\tau)}\right)^2 \\
&\qquad \qquad \qquad \qquad+ \left(\frac{\sin \pi (t-\tau-1)}{\pi (t-\tau-1)}\right)^2 +  \left(\frac{\sin \pi (t+\tau+1)}{\pi (t+\tau+1)}\right)^2 \\
&= \frac{\sin^2(\pi(t-\tau)) (2(t-\tau)-1)}{(t-\tau)^2 (t-\tau-1)^2} - \frac{\sin^2(\pi(t+\tau))(2(t+\tau)+1)}{(t+\tau)^2 (t+\tau+1)^2}.
\end{split}
\end{align}
 
 Since the first term is negative for $t-\tau <1/2$ and the second term is positive for $t+\tau>-1/2$, it follows  that
\[
\widehat{g}_\tau''(t)<0\text{ for } -\tau -\tfrac12< t < \tau+\frac12.
\]

Multivariate chain rule and Proposition \ref{first-partials} show that  
\[
\frac{\partial}{\partial\tau} \left( \widehat{g}_\tau(\tau) \right) >0,
\]
 and since $\widehat{g}_0(0)>0$, it follows that $\widehat{g}_\tau(\tau)>0$ for all $\tau$. Since $\widehat{g}_\tau$ is concave down on $[-\tau,\tau]$, it follows that $\widehat{g}_\tau(t)>0$  for $t\in [-\tau,\tau]$. It follows that the second derivative of $t\mapsto (\widehat{g}_\tau(t))^{-1}$ is positive for $|t|\le \tau$. 
\end{proof}

%(Not needed?) We observe that $P_k(t) = p_k(t^2)$ where $p_k$ is a quadratic with zeros at 
%\[
%t = \frac16\left( 2k^2-2k+1 \pm \sqrt{16k^4 -6k^3 -4k^2 -4k+1}\right).
%\] 

%These are a positive and a negative zero, so $a_k$ is the square root of the positive zero. Furthermore, applying this to $P_k(t)/Q_k(t)$ shows that this quotient is monotonically increasing between $t=0$ and $t=a_k$. 

% As seen above, $\widehat{g}_k(t)\ge \widehat{g}_k(k-3)$ for $|t|\le k-3$, and we have
%\begin{align*}
%|\widehat{g}_k(k-3) -\widehat{g}_k(0)|  &\le \int_{0}^{k-3} \int_{0}^u |\widehat{g}_k''(s)| dsdu \\%
%&= \int_0^{k-3} (k-3-s) (-\widehat{g}_k''(s))  ds\\
%&=  \int_0^{k-3} (k-3-s) \frac{\sin^2\pi s}{\pi^2} \frac{P_k(s)}{Q_k(s)} ds\\
%&\le \frac14(14\log(3/2)-4)
%\end{align*}
%which is $\le 0.42$. The additional error bounds are obtained in the same way.

\begin{proof}[Proof of Theorem \ref{thm1}]  Setting $g_{\tau,\delta}(x) = g_{\tau\delta/2}(2x/\delta)$, we observe that $g_{\tau,\delta}$ is supported on $[-\delta/2,\delta/2]$, and 
\[
\widehat{g}_{\tau,\delta}(t) = \frac{\delta}{2} \widehat{g}_{\tau\delta/2}(\delta t/2).
\]

It follows that $t\mapsto (\widehat{g}_{\tau,\delta}(t))^{-1}$ is positive and convex up for $|t|\le \tau$.  Let $a_n = a_n(\tau,\delta)$ be the Fourier coefficients satisfying
\[
\frac{1}{\widehat{g}_{\tau,\delta}(t)} = \sum_{n\in \Z} a_n e^{\pi i \frac{t}{\tau} n}
\]
for $|t|\le \tau$.   Positivity and convexity imply that $|a_n| = (-1)^n a_n$. Define a measure $\nu= \nu_{\tau,\delta}$ on $\R$ for any Borel set $A$ by
\[
\nu(A) = \sum_{n\in\Z} a_n\delta_{n/(2\tau)}(A)
\]
where $\delta_b$ is the Dirac measure at $b\in \R$. We observe that $\widehat{\nu}(t) = 1/\widehat{g}_{\tau,\delta}(t)$ for $|t|\le \tau$, and the total variation satisfies
\[
|\nu|(\R) = \sum_{n\in\Z} |a_n| = \sum_{n\in\Z} a_n(-1)^n = \frac{1}{\widehat{g}_{\tau\delta/2} (\tau\delta/2)}.
\]

It follows that convolution with $\nu$ is the inverse operator of convolution with $g_{\tau,\delta}$ when restricted to $PW_\tau^1$. Moreover, for $g_{\tau,\delta}$ the choice of $\nu$ is optimal, since the value of the Fourier transform of $\nu$ is always a lower bound for the total variation.

 It follows that
\[
\|T_{g_{\tau,\delta}}^{-1}\| = \frac{1}{\widehat{g}_{\tau\delta/2}(\tau\delta/2)}.
\] 

We observe the identities
\[
\frac{\|g_{\tau,\delta}\|_\infty}{\widehat{g}_{\tau,\delta}(\tau)} = \frac{2}{\delta} \frac{\|g_{\tau\delta/2}\|_\infty}{\widehat{g}_{\tau\delta/2}(\tau\delta/2)} =   \frac{2\tau +2\delta^{-1}}{\widehat{g}_{\tau\delta/2}(\tau\delta/2)}.
\]

For $\tau>0$ and $\delta>0$ we use the inequality $\widehat{g}_{\tau\delta/2}(\tau\delta/2) \ge \widehat{g}_0(0) >0.65$. For $\tau\delta \ge 2$, we may use the lower bound $\widehat{g}_1(1) > 0.8$ instead. 
\end{proof}

\section{Proof of Theorem \ref{thm2}}

As in the first section, the main task lies in computing a minimal extrapolation of $1/\widehat{g}$ restricted support of the Fourier transform for a suitably chosen function $g$. For $x\in \R^d$ we consider
\[
g_\alpha(x) = \chi_{B(0,\alpha)}(x)
\]
whose Fourier transform for $t\in\R^d$ is 
\[
\widehat{g}_\alpha(t) = \frac{\alpha^{d/2} J_{d/2}(2\pi \alpha |t|)}{|t|^{d/2}}.
\]

To construct a minimal extrapolation of $1/\widehat{g}_\alpha$ restricted to $[-\tau,\tau]^d$, we need  facts from the theory of Laguerre-P\'olya entire functions. We follow   \cite{HW55}. An entire function $E$ belongs to the Laguerre-P\'olya class $\mc{E}$ if and only if it has the form 
\[
E(s)=e^{cs^2+bs}\prod\limits_{k=1}^{\infty}\left(1-\frac{s}{a_k}\right)e^{\frac{s}{a_k}},$$ where $c\geq 0$, $b$, $a_k(k=1,2,\cdots)$ are real, and $$\sum_{k=1}^{\infty}\frac{1}{a_k^2}<\infty.
\]

\begin{lemma}\label{recip}
There exists a non-negative, integrable function $G$ such that for $x\in (-j_p(1),j_p(1))$
$$\frac{x^p}{J_{p}(x)}=\int\limits_{-\infty}^0 e^{-x^2 t}G(t)dt.$$
\end{lemma}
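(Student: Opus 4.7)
The plan is to reformulate the identity as a one-sided Laplace transform, establish complete monotonicity via the Hadamard factorization of $J_p$, and then apply the Bernstein--Widder theorem together with the Hirschman--Widder analysis of reciprocals of Laguerre-P\'olya functions.

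First, using the Hadamard product $J_p(x)=\frac{(x/2)^p}{\Gamma(p+1)}\prod_{k\ge 1}\bigl(1-x^2/j_p(k)^2\bigr)$, the substitutions $s=-t$ and $v=-x^2$ transform the claim into: there exists $H\in L^1([0,\infty))$ with $H\ge 0$ such that
\[
F(v):=2^p\,\Gamma(p+1)\prod_{k=1}^{\infty}\frac{1}{1+v/j_p(k)^2}=\int_{0}^{\infty}e^{-vs}H(s)\,ds
\]
for $v>-j_p(1)^2$; the original identity then follows by taking $v=-x^2$ with $|x|<j_p(1)$, and $G$ is recovered as $G(t)=H(-t)$ on $(-\infty,0)$. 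Note that a direct computation identifies $F(v)=v^{p/2}/I_p(\sqrt v)$, which will be used for the decay estimate below.

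Second, I would show that $F$ is completely monotone on $(-j_p(1)^2,\infty)$. Each factor $(1+v/j_p(k)^2)^{-1}=\int_0^\infty e^{-vs}\,j_p(k)^2 e^{-j_p(k)^2 s}\,ds$ is the Laplace transform of a non-negative density, hence completely monotone on $(-j_p(k)^2,\infty)\supset(-j_p(1)^2,\infty)$. Products of completely monotone functions are completely monotone, so the partial products $F_N(v):=2^p\,\Gamma(p+1)\prod_{k\le N}(1+v/j_p(k)^2)^{-1}$ inherit this property. Since $\sum_k j_p(k)^{-2}<\infty$, the $F_N$ converge to $F$ uniformly on compact subsets of $\{\mathrm{Re}\,v>-j_p(1)^2\}$, and uniform convergence of holomorphic functions carries over to all derivatives, so $F$ itself is completely monotone on $(-j_p(1)^2,\infty)$. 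The Bernstein--Widder theorem then produces a non-negative Borel measure $\mu$ on $[0,\infty)$ with $F(v)=\int_0^\infty e^{-vs}\,d\mu(s)$, and evaluation at $v=0$ gives $\mu([0,\infty))=F(0)=2^p\Gamma(p+1)<\infty$.

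The main obstacle is upgrading $\mu$ to an $L^1$ density. The cleanest path is to invoke \cite{HW55}: the entire function $v\mapsto\prod_k(1+v/j_p(k)^2)$ lies in $\mc{E}$ with genus zero and only negative zeros, and the Hirschman--Widder theorem for reciprocals of such Laguerre-P\'olya functions delivers the required $L^1$-density directly. As a self-contained alternative, one can realize $H$ as the limit of the explicit finite convolutions $H_N=\ast_{k\le N}\bigl(j_p(k)^2 e^{-j_p(k)^2 s}\chi_{s>0}\bigr)$: the $H_N$ are non-negative with $\|H_N\|_1=F_N(0)=2^p\Gamma(p+1)$ independent of $N$, so by Scheff\'e's lemma $L^1$-convergence reduces to pointwise a.e.\ convergence, which follows from Laplace inversion and the rapid vertical-line decay $|F(c+i\eta)|\le C\,|\eta|^{p/2+1/4}e^{-\sqrt{|\eta|/2}}$ (inherited from the asymptotic $I_p(z)\sim e^z/\sqrt{2\pi z}$).
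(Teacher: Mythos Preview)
Your primary route---recognizing that the entire function $v\mapsto\prod_k(1+v/j_p(k)^2)$ lies in the Laguerre--P\'olya class and invoking \cite{HW55} for its reciprocal---is exactly the paper's argument: the paper substitutes $y=x^2$ (your $v=-x^2$), observes that $J_p(\sqrt{y})/y^{p/2}\in\mc{E}$, applies Theorem~6.1 of \cite{HW55} directly to obtain the non-negative integrable density, and then uses Corollary~3.1 of Chapter~5 in \cite{HW55} to locate the support on $(-\infty,0)$. Your preliminary detour through complete monotonicity and Bernstein--Widder is extra scaffolding rather than a different idea. The self-contained alternative you sketch---realizing the density as an $L^1$-limit of finite convolutions of exponentials, with pointwise convergence supplied by Laplace inversion and the decay coming from $I_p(z)\sim e^{z}/\sqrt{2\pi z}$---is a genuine departure from the paper and has the merit of bypassing the \cite{HW55} black box at the cost of more analysis; note one slip there: the convolution $\ast_{k\le N}\bigl(j_p(k)^2 e^{-j_p(k)^2 s}\chi_{s>0}\bigr)$ is a convolution of probability densities and hence has $L^1$-norm $1$, so you must multiply by $2^p\Gamma(p+1)$ to match $F_N$ and get $\|H_N\|_1=F_N(0)$.
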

\begin{proof}
The Bessel function $J_p(x)$ has an infinite product representation  \cite[Section 10.21(iii)]{NIST:DLMF}. Dividing each side by $x^p$ gives us
\begin{align*}
\frac{J_p(x)}{x^p}&=\frac{1}{2^p \Gamma(p+1)}\prod\limits_{k=1}^{\infty}\left(1-\frac{x^2}{j_{p,k}^2}\right)\\
&=\frac{1}{2^p \Gamma(p+1)}\prod\limits_{k=1}^{\infty}\left(1-\frac{x}{j_{p,k}}\right)e^{x/j_{p,k}}\prod\limits_{k=1}^{\infty}\left(1+\frac{x}{j_{p,k}}\right)e^{-x/j_{p,k}}
\end{align*}

Substituting $x=\sqrt{y}$ in the infinite product representation of $\frac{J_p(x)}{x^p}$ gives us 
$$\frac{J_p(\sqrt{y})}{y^{p/2}}=\frac{1}{2^p\Gamma(p+1)}\prod\limits_{k=1}^{\infty}\left(1-\frac{y}{j_{p,k}^2}\right)$$ 
which is an entire function and belongs to class $\mc{E}$. Let $E(x)=\frac{J_p(x)}{x^p}$. Then by \cite[Theorem 6.1]{HW55} the function $1/E(\sqrt{y})$ has a Laplace transform representation given by 
$$\frac{1}{E(\sqrt{y})}=\int\limits_{\R}e^{-y t}G(t)dt,$$
where $G(t)\in C^{\infty}$ is a nonnegative, integrable function and the integral converges in the largest vertical strip which contains the origin and is free of zeroes of $E(\sqrt{y})$, which is $-\infty<y<j_{p,1}^2$. Next, we want to determine the values of $t$ for which $G(t)>0$. This result is obtained using Corollary 3.1 (Chapter 5 in \cite{HW55}). Note that $E(\sqrt{y})$ can be expressed as  
\begin{align*}
E(\sqrt{y})&=\frac{1}{2^p\Gamma(p+1)}\prod\limits_{k=1}^{\infty}\left(1-\frac{y}{j_{p,k}^2}\right)\\
%&=\frac{1}{2^p\Gamma(p+1)}\prod\limits_{k=1}^{\infty}e^{-(y/j_{p,k})}\prod\limits_{k=1}^{\infty}\left(1-\frac{y}{j_{p,k}^2}\right)e^{(y/j_{p,k})}\\
&=\frac{1}{2^p\Gamma(p+1)}\frac{1}{e^{-\sum\limits_{k=1}^{\infty}(y/j_{p,k})}}\prod\limits_{k=1}^{\infty}\left(1-\frac{y}{j_{p,k}^2}\right)e^{(y/j_{p,k})}\\
\end{align*}
The function $E(\sqrt{y})$ has no negative zeroes. Therefore, in the setting of Corollary 3.1, $\alpha_1=-\infty$ and $b=-\sum\limits_{k=1}^{\infty}\frac{1}{j_{p,k}}$. Therefore $G(t)>0$ if $t\in(-\infty,0))$ and $G(t)=0$ otherwise, giving us for $y\in (0,j_{p,1}^2)$,
$$\frac{1}{E(\sqrt{y})}=\int\limits_{-\infty}^0 e^{-yt}G(t)dt.$$
Substituting $y = x^2$ gives the claim.
\end{proof}

Let $\lambda>0$ and $\alpha>0$ with $2\pi\sqrt{d} \alpha\lambda<j_{d/2}(1)$. We construct a (signed) measure $\nu$ that is an inverse transform on $\mc{B}_1([-\lambda,\lambda]^d)$ of convolution with $g_\alpha$ satisfying
\[
\|f*\nu\|_1 \le \frac{(\sqrt{d} \lambda)^{d/2} }{\alpha^{d/2} J_{d/2}(2\pi\sqrt{d} \lambda \alpha)}\|f\|_1,
\]
and we show that the constant is best possible among all inverse transformations of convolution with $g_\alpha$ on $\mc{B}_1([-\lambda,\lambda]^d)$.  We expand $1/\widehat{g}_\alpha$ restricted to $[-\lambda,\lambda]^d$ into its Fourier series
\[
\frac{1}{\widehat{g}_\alpha(t)} = \sum_{n\in\Z^d} H_\alpha(n) e^{2\pi i n t}
\]
where
\[
H_\alpha(n) = \left(\frac{1}{2\lambda}\right)^d \int_{[-\lambda,\lambda]^d} \frac{|x|^{d/2}}{\alpha^{d/2} J_{d/2}(2\pi \alpha|x|)} e^{-i\frac\pi\lambda nx} dx.
\]

\begin{lemma} The coefficients satisfy
\[
H(n_1,...,n_d) = (-1)^{n_1+...+n_d} |H(n_1,...,n_d)|
\]
\end{lemma}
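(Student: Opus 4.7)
The plan is to use Lemma \ref{recip} to rewrite the reciprocal Bessel factor as a Laplace transform of a nonnegative density, substitute this representation into the integral defining $H(n)$, and exploit the tensor identity $|x|^2=\sum_j x_j^2$ to factor the cube integral into a product of one-dimensional integrals. The multidimensional sign claim then reduces to a one-dimensional sign question about $\int_{-\lambda}^{\lambda}e^{\beta x^2}\cos(\pi n x/\lambda)\,dx$ for $\beta\ge 0$ and $n\in\Z$, which will be handled by a Gaussian (Hubbard--Stratonovich) representation.

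Concretely, Lemma \ref{recip} with $p=d/2$ applied at the point $2\pi\alpha|x|$ yields
\[
\frac{|x|^{d/2}}{\alpha^{d/2}J_{d/2}(2\pi\alpha|x|)}=\int_{-\infty}^{0}G(t)\,e^{-4\pi^2\alpha^2|x|^2 t}\,dt,\qquad G\ge 0,
\]
which is valid on $[-\lambda,\lambda]^d$ since $2\pi\alpha|x|\le 2\pi\sqrt{d}\,\alpha\lambda<j_{d/2}(1)$ under the hypothesis. Substituting into the integral for $H(n)$ and invoking Fubini (the absolute integrability is controlled by $4\pi^2 d\alpha^2\lambda^2<j_{d/2}(1)^2$, which is exactly the convergence strip in Lemma \ref{recip}), the relation $|x|^2=\sum_j x_j^2$ makes the cube integral factor as a product of single-coordinate integrals, with the sine terms dropping by odd symmetry. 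Writing $\beta:=-4\pi^2\alpha^2 t\ge 0$, it suffices to prove that for every integer $n$ and every $\beta\ge 0$,
\[
(-1)^{n}\int_{-\lambda}^{\lambda}e^{\beta x^2}\cos(\pi n x/\lambda)\,dx\ge 0.
\]

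For this one-dimensional sign, the plan is to use the Gaussian representation
\[
e^{\beta x^2}=\frac{1}{\sqrt{\pi}}\int_{\R}e^{-r^2}\cosh(2\sqrt{\beta}\,rx)\,dr,
\]
interchange the order of integration, and evaluate the inner $x$-integral via the elementary antiderivative of $\cosh(ax)\cos(bx)$. Using $\cos(\pi n)=(-1)^{n}$ and $\sin(\pi n)=0$, the inner integral collapses to a multiple of
\[
\frac{(-1)^{n}\,\sqrt{\beta}\,r\sinh(2\sqrt{\beta}\,r\lambda)}{4\beta r^2+(\pi n/\lambda)^2}.
\]
The key observation is that $r\sinh(2\sqrt{\beta}\,r\lambda)\ge 0$ on all of $\R$ because $\sinh$ is odd, so integration against the nonnegative weight $e^{-r^2}$ produces a nonnegative number, and the removable singularity at $r=0$ is handled by Taylor expanding $\sinh$. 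Taking the product over the $d$ coordinates yields the overall sign $\prod_{j=1}^d(-1)^{n_j}=(-1)^{n_1+\cdots+n_d}$, and the final integration against $G(t)\ge 0$ preserves this sign, giving the lemma.

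The main obstacle is the one-dimensional sign claim. A naive attack through the power series $e^{\beta x^2}=\sum_{k\ge 0}\beta^k x^{2k}/k!$ reduces matters to checking that $\int_0^1 u^{2k}\cos(\pi n u)\,du$ has sign $(-1)^n$; these integrals satisfy a recursion whose leading term points the right way, but whose correction terms make uniform positivity of the full series awkward to extract. The Hubbard--Stratonovich rewriting circumvents this by exhibiting the sign structure in closed form through the factor $r\sinh(2\sqrt{\beta}\,r\lambda)$, whose nonnegativity is immediate.
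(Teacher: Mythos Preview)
Your argument is correct and follows the paper's proof essentially line for line: both use Lemma \ref{recip} to replace $|x|^{d/2}/(\alpha^{d/2}J_{d/2}(2\pi\alpha|x|))$ by a superposition of Gaussians $e^{-4\pi^2\alpha^2|x|^2 t}$ with $t<0$, then exploit $|x|^2=\sum_j x_j^2$ to factor the cube integral into one-dimensional pieces, reducing the claim to the sign of $\int_{-\lambda}^{\lambda}e^{\beta x^2}\cos(\pi n x/\lambda)\,dx$ for $\beta>0$.

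The only divergence is in how this last one-dimensional sign is established. The paper observes that $x\mapsto e^{\beta x^2}$ is positive, even, and convex on $[-\lambda,\lambda]$ and appeals to the classical fact (two integrations by parts, using $\sin(\pi n)=0$ and $\cos(\pi n)=(-1)^n$) that the Fourier cosine coefficients of such a function alternate in sign. You instead linearize $e^{\beta x^2}$ via the Hubbard--Stratonovich identity $e^{\beta x^2}=\pi^{-1/2}\int_{\R}e^{-r^2}\cosh(2\sqrt{\beta}\,rx)\,dr$, evaluate the resulting $\int_{-\lambda}^{\lambda}\cosh(ax)\cos(bx)\,dx$ in closed form, and read off the sign from $r\sinh(2\sqrt{\beta}\,r\lambda)\ge 0$. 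Both routes are short and valid; the paper's is slightly more conceptual (it only needs convexity, not the specific exponential form), while yours is more explicit and avoids any appeal to a ``standard'' Fourier-coefficient lemma. Your Fubini justification via $4\pi^2 d\alpha^2\lambda^2<j_{d/2}(1)^2$ matching the convergence strip of Lemma \ref{recip} is exactly the right check.
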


\begin{proof} The restrictions on $\alpha\lambda$ imply that the following integrals converge absolutely. Inserting the Schoenberg representation \eqref{} gives with $n = (n_1,...,n_d)$ 
\[
H_\alpha(n) = \left( \frac{1}{2\sqrt{2\pi} \alpha\lambda}\right)^d \int_{-\infty}^0 G(t) \left( \prod_{j=1}^d \int_{[-\lambda,\lambda]} e^{-x_j^2 t} e^{-i\frac\pi\lambda n_j x_j} dx_j\right) dt
\]

Since $t<0$, the function $x_j\mapsto e^{-t x_j^2}$ is positive, symmetric, and convex up. Hence $e^{-i\frac\pi\lambda n_j x_j}$ may be replaced by $\cos(\frac\pi\lambda n_j x_j)$. A short argument involving two integration by parts may be used to show that 
\[
(-1)^{n_j} \int_{[-\lambda,\lambda]} e^{-x_j^2 t} \cos(\tfrac\pi\lambda n_j x_j ) dx_j \ge 0,
\]
which implies the claim of the lemma.
\end{proof}

We define a measure $\nu_\alpha$ on $\R^d$ by
\[
\nu_\alpha = \sum_{n\in \Z^d} H_\alpha(n) \delta_{\frac{n}{2\lambda}}
\]
where $\delta_x$ is the point measure at $x$ with $\delta_x(\R^d) =1$.

\begin{lemma}\label{d-dim-conv-inv} Let $\lambda$ and $\alpha$ be positive with $2\pi\sqrt{d} \lambda\alpha< j_{d/2}(1)$. Convolution with $\nu_\alpha$ is the inverse operator of convolution with $g_\alpha$ on $\mc{B}_1([-\lambda,\lambda]^d$ with
\[
\|f*\nu_\alpha\|_1 \le \frac{(\sqrt{d} \lambda)^{d/2}}{\alpha^{d/2} J_{d/2}(2\pi\sqrt{d} \alpha\lambda)} \|f\|_1
\]
for all $f\in \mc{B}_1([-\lambda,\lambda]^d)$. 
\end{lemma}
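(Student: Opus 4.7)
The plan is to view $\nu_\alpha$ as a finite signed atomic measure whose Fourier transform agrees with $1/\widehat{g}_\alpha$ on the cube $[-\lambda,\lambda]^d$, so that convolution with $\nu_\alpha$ inverts convolution with $g_\alpha$ on $\mc{B}_1([-\lambda,\lambda]^d)$ by Fourier uniqueness. The norm estimate then follows at once from Young's inequality $\|f*\nu_\alpha\|_1\le|\nu_\alpha|(\R^d)\,\|f\|_1$, so the real content is to compute $|\nu_\alpha|(\R^d)$ and identify it with the claimed constant.

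Since $\nu_\alpha$ is concentrated on the lattice $(2\lambda)^{-1}\Z^d$, its total variation equals $\sum_{n\in\Z^d}|H_\alpha(n)|$. The sign lemma gives $|H_\alpha(n)|=(-1)^{n_1+\cdots+n_d}H_\alpha(n)$, and so this sum is formally the Fourier series of $1/\widehat{g}_\alpha$ evaluated at the corner $t^\ast=(\lambda,\ldots,\lambda)$, where $|t^\ast|=\sqrt{d}\,\lambda$. The anticipated total variation is therefore
\[
\frac{|t^\ast|^{d/2}}{\alpha^{d/2}J_{d/2}(2\pi\alpha|t^\ast|)}=\frac{(\sqrt{d}\,\lambda)^{d/2}}{\alpha^{d/2}J_{d/2}(2\pi\sqrt{d}\,\alpha\lambda)},
\]
which is exactly the constant appearing in the lemma.

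Turning this formal identity into an honest equality is the main technical step. The hypothesis $2\pi\sqrt{d}\,\alpha\lambda<j_{d/2}(1)$ keeps $J_{d/2}(2\pi\alpha|t|)$ bounded away from zero on the closed cube, so $1/\widehat{g}_\alpha$ is smooth there and in particular continuous up to $t^\ast$. I would use an Abel-summation argument: for $0<r<1$ set $S(r)=\sum_n r^{|n_1|+\cdots+|n_d|}(-1)^{n_1+\cdots+n_d}H_\alpha(n)$, which converges absolutely thanks to the boundedness of the coefficients together with the geometric factor. A tensor product of one-dimensional Poisson kernels identifies $S(r)$ with a harmonic-type average of $1/\widehat{g}_\alpha$ over the cube, and continuity at $t^\ast$ forces $S(r)\to 1/\widehat{g}_\alpha(t^\ast)$ as $r\to 1^-$. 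Since every term in the series has been arranged to be nonnegative, monotone convergence yields $\sum_n|H_\alpha(n)|=1/\widehat{g}_\alpha(t^\ast)$, so $\nu_\alpha$ is a finite signed measure with the stated total variation.

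Once absolute summability is in hand, the series $\widehat{\nu_\alpha}(t)=\sum_n H_\alpha(n)e^{-i\pi n\cdot t/\lambda}$ converges uniformly, and by evenness of $1/\widehat{g}_\alpha$ it coincides with $1/\widehat{g}_\alpha(t)$ on $[-\lambda,\lambda]^d$. For $f\in\mc{B}_1([-\lambda,\lambda]^d)$ the transform $\widehat{f}$ is continuous and supported in the cube, so $\widehat{(f*g_\alpha)*\nu_\alpha}=\widehat{f}\,\widehat{g}_\alpha\,\widehat{\nu_\alpha}=\widehat{f}$ on all of $\R^d$, and Fourier uniqueness gives $(f*g_\alpha)*\nu_\alpha=f$. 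Combining this inverse property with Young's inequality and the computed total variation yields the stated $L^1$ bound. The Abel-summability justification is the only nonroutine step; everything else reduces to direct Fourier manipulation.
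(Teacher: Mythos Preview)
Your proof is correct and follows the same route as the paper: use the sign lemma to rewrite $\sum_n|H_\alpha(n)|$ as the Fourier series of $1/\widehat{g}_\alpha$ evaluated at the corner $(\lambda,\ldots,\lambda)$, then apply Young/Minkowski. The only difference is that you supply an Abel-summation and monotone-convergence justification for the equality $\sum_n|H_\alpha(n)|=1/\widehat{g}_\alpha(\lambda,\ldots,\lambda)$, which the paper simply asserts without comment.
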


\begin{proof} By construction of $\nu_\alpha$ we have
\[
\widehat{g}_\alpha(t) \widehat{\nu}_\alpha(t) = 1
\]
for all $t\in [-\lambda,\lambda]^d$, and we observe that the total variation measure $|\nu_\alpha|$ satisfies
\[
|\nu_\alpha|(\R^d) = \sum_{n\in\Z^d} |H_\alpha(n)| = \sum_{n\in\Z^d} H_\alpha(n) (-1)^{n_1+...+n_d} = \frac{1}{\widehat{g}_\alpha(\lambda,...,\lambda)},
\]
and Minkowski's inequality $\|f*\nu_\alpha\|_1 \le |\nu_\alpha| \, \|f\|_1$ shows that convolution with $\nu_\alpha$ defines a bounded operator on $\mc{B}_1([-\lambda,\lambda]^d)$ that inverts convolution with $g_\alpha$. 
\end{proof}

 Lemma \ref{d-dim-conv-inv} gives a bound for the operator norm of the inverse of convolution with $g_\alpha$, and the calculation at the beginning of the proof of Theorem \ref{thm1} may be used to complete the proof of Theorem \ref{thm2}.

%%
%%

% \section{Open problems}

\bibliographystyle{amsplain}

\bibliography{references}

\end{document}